\newtheorem{thm}{Theorem}[section]
\newtheorem{cor}[thm]{Corollary}
\newtheorem{lem}[thm]{Lemma}
\theoremstyle{definition}
\newtheorem{defn}[thm]{Definition}
\theoremstyle{remark}
\newtheorem{conj}[thm]{Conjecture}
\newtheorem{ca}{Case}%[section]
\numberwithin{equation}{section}
\def\be{\begin{equation}}
\def\ee{\end{equation}}
\newcommand{\bthm}{\begin{thm}}
\newcommand{\ethm}{\end{thm}}
\newcommand{\bcor}{\begin{cor}}
\newcommand{\ecor}{\end{cor}}
\newcommand{\blem}{\begin{lem}}
\newcommand{\elem}{\end{lem}}
\newcommand{\bcon}{\begin{conj}}
\newcommand{\econ}{\end{conj}}
\newcommand{\bca}{\begin{ca}}
\newcommand{\eca}{\end{ca}}
\newcommand{\beq}{\begin{eqnarray}}
\newcommand{\beqq}{\begin{eqnarray*}}
\newcommand{\eeq}{\end{eqnarray}}
\newcommand{\eeqq}{\end{eqnarray*}}
\newcommand{\bdefe}{\begin{defn}}
\newcommand{\edefe}{\end{defn}}
\newcommand{\bpf}{\begin{proof}}
\newcommand{\epf}{\end{proof}}
\newtheorem{statement}{}[section]
\newtheorem{theoreme}[statement]{Theorem}
\newtheorem{lemme}[statement]{Lemma}
\newtheorem{remarque}[statement]{Remark}
\newtheorem{corollaire}[statement]{Corollary}
\newcommand\C{\mathbb C}
\newcommand\N{\mathbb N}
\newcommand\D{\mathbb D}
\newcommand\eps{\varepsilon}
\newcommand\ind{{\rm 1\kern-.30em I}}
\newcounter{alphabet}
\newcounter{tmp}
\newenvironment{Thm}[1][]{\refstepcounter{alphabet}%
\bigskip%
\noindent%
{\bf Theorem \Alph{alphabet}}%
%\ifthenelse{\equal{#1}{}}{}{ (#1)}%used by samy
{\bf .} \itshape}{\vskip 8pt}
\begin{document}

\title[Composition operator on the Hardy-Dirichlet space]{Estimate for norm of a composition operator on the Hardy-Dirichlet space}

\author[P. Muthukumar]{Perumal Muthukumar}
\address{Stat-Math Unit,
Indian Statistical Institute (ISI), Chennai Centre,
110, Nelson Manickam Road,
Aminjikarai, Chennai, 600 029, India.}
\email{pmuthumaths@gmail.com}

\author[S. Ponnusamy]{Saminathan Ponnusamy}

\address{Department of Mathematics,
Indian Institute of Technology Madras, Chennai-600 036, India}
\email{samy@iitm.ac.in, samy@isichennai.res.in }

\author[H. Queff\'elec]{Herv\'e Queff\'elec}
\address{Univ Lille Nord de France F-59,\kern 1mm 000 Lille,
USTL, Laboratoire Paul Painlev\'e U.M.R. CNRS 8524,
F-59\kern 1mm 655 Villeneuve D'ascq Cedex,
France.}
\email{herve.queffelec@univ-lille1.fr}

\thanks{The second  author is currently at ISI Chennai Centre.}

%----------classification, keywords, date
\subjclass{Primary: 47B33, 47B38; Secondary: 11M36, 37C30}
\keywords{Composition operator, Hardy space, Dirichlet series, Schur test, zeta function}
\date{December 29, 2017}
%----------additions
%\dedicatory{To my boss}

\begin{abstract}
By using  the Schur test, we give some upper and lower estimates on the norm of a composition operator on $\mathcal{H}^2$, the space of
Dirichlet series with square summable coefficients,  for the inducing symbol  $\varphi(s)=c_1+c_{q}q^{-s}$ where $q\geq 2$
is a fixed integer. We also give an estimate on the approximation numbers of such an operator.
\end{abstract}
\maketitle

\section{Introduction}
Let $\Omega$ be a domain in the complex plane $\C$. For a given analytic self map $\varphi$ of $\Omega$, the
corresponding composition operator $C_\varphi$ induced by the symbol $\varphi$ is defined by
%$$ C_\phi(f)=g ~\mbox{ where }~ g(x)=f(\varphi(x)) ~\mbox{ for all $x\in \Omega$ and analytic function $f$ on $\Omega$}.
%$$
$ C_\varphi(f)= f\circ\varphi  ~\mbox{ for every analytic function $f$ on $\Omega$}.$
In the classical case, $\Omega$ is taken as the unit disk $\D= \{z \in \C:\, |z|<1\}$ and the operator $C_\varphi$
is considered on various analytic function spaces on ${\mathbb D}$ such as the Hardy spaces $H^p$, the Bergman
spaces $A^p$ and the Bloch space $\mathcal B$.

For a real number $\theta$, we set $\C_\theta=\{s\in \C :\ {\rm Re}\,  s>\theta\}.$ In this article,
$\Omega$ will be taken to be the half plane $\C_{1/2}$, the map $\varphi$ to be the analogue of
affine map in the classical case and the composition operator $C_\varphi$ is considered on the Hardy-Dirichlet
space $\mathcal{H}^2$,  which is a Dirichlet series analogue of the classical Hardy space.

Determining the value of the norm of composition operators is not an easy task and hence, not much
is known on this problem even in the case of classical Hardy space except for some special cases. For example, the
norm of a composition operator on $H^2$ induced by the simple affine mapping of $\D$ is complicated
(see \cite[Theorem 3]{CO}). Not to speak of the approximation numbers of $C_\varphi$, even though the latter were
computed in \cite{CLDA}.  In case of the space  $\mathcal{H}^2$ of Dirichlet series with square-summable
coefficients, there are no good lower and upper bounds even for the norm of such operators except for some special cases.
As a first step, in this paper, we give some upper and lower estimates on the norm of a composition operator on $\mathcal{H}^2$,
for the inducing symbol $\varphi(s)=c_1+c_{q}q^{-s}$ with $q\in \N,\ \ q\geq 2$. Here $\N$ denotes the set
of all natural numbers and we set $\N_0=\N \cup\{0\}$. Without loss of generality, we will assume that $q=2$. One significant difference is that some properties of the Riemann zeta function, be it only  in the half-plane $\C_1$, are required.

The article is organized as follows. In Section \ref{MPQ3Sec2}, definition and some important properties of Hardy-Dirichlet
space $\mathcal{H}^2$ are recalled. Also, the boundedness of composition
operators on $\mathcal{H}^2$ is discussed. In Section \ref{MPQ3Sec3}, motivation for this work and estimates
for the norm of $C_\varphi$ for the affine-like inducing symbols are given.
Finally,  in Section \ref{MPQ3Sec4}, we give an estimate for approximation numbers of a composition operators
in our $\mathcal{H}^2$ setting.

One may refer to \cite{Zhu-livre} for basic information about analytic function spaces of $\D$ and operators on
them. Basic issues on composition operators on various function spaces on $\D$ may be obtained from \cite{COMA}. See
also \cite{HST} for results related to analytic number theory.
%In his dissertation, D. Retsek \cite{RET} studied the norm
%of some composition operators on the Hardy space $H^2$ of the unit disk. He in particular coined the parameters
%$$S_\varphi=\sup_{z\in \D} \Vert C_{\varphi}(k_z)\Vert \quad ~\mbox{and} \quad S_{\varphi}^{\ast}=
%\sup_{z\in \D} \Vert C_{\varphi}^{\ast} (k_z)\Vert,
%$$
%where $k_z=\frac{K_z}{\Vert K_z\Vert}$ is the normalized reproducing kernel at $z$. He recalled a result of Appel,
%Bourdon, and Thrall \cite{ABT} saying that, for the inner function $\psi(z)=\Big(\frac{1/2-z}{1-z/2}\Big)^2$,
%one has $\Vert C_\psi\Vert > S_{\psi}^{\ast}$, which answers positively to a question of Cowen and McCluer \cite{COMA}.
% Moreover, it was proved in  \cite{ABT} that, for the univalent symbol $\varphi(z)=\frac{2}{3-z}$, one had
%$$\Vert C_\varphi\Vert>S_\varphi>S_{\varphi}^{\ast}.
%$$
%The authors in \cite{ABT} ask for which larger class of symbols this is still the case (this is connected to
%the so-called Reproducing Kernel Thesis, RKT for short). The purpose of this work is to initiate a similar
% work for a Hardy space formed by Dirichlet series.

\section{Composition operators on the Hardy space of Dirichlet series}\label{MPQ3Sec2}
 %%%%%%%%%%%%%%%%%%%%%%%%%%%%%%%%%%%%%%%%%%%%%%%%%%%%%%%%%%%%%%%%%%%%%%%%%%%%%%%%%%

The Hardy-Dirichlet space $\mathcal{H}^2$ is defined by
\begin{equation}\label{hachede}
\mathcal{H}^2= \left\{ f(s)=\sum_{n=1}^\infty a_n n^{-s} :\, \Vert f\Vert^2 =\sum_{n=1}^\infty |a_n|^2<\infty \right\}.
\end{equation}
The space $\mathcal{H}^2$ has been used in \cite{HLS} for the study of completeness problems of a system of dilates
of a given function. The following properties are obvious:
\begin{itemize}
\item If $f\in \mathcal{H}^2$, then the Dirichlet series in (\ref{hachede}) converges absolutely in $\C_{1/2}$,
and therefore $\mathcal{H}^2$ is a Hilbert space of analytic functions on $\C_{1/2}$.
\item The functions $\{e_n\}$ defined on $\C_{1/2}$ by $e_{n}(s)=n^{-s}, n\geq 1$, form an orthonormal basis for $\mathcal{H}^2$.
\item Accordingly, the reproducing kernel $K_a$ of $\mathcal{H}^2$ ($f(a)=\langle f, K_a\rangle$
for all $f\in \mathcal{H}^2$) is given by
$$K_{a}(s)=\sum_{n=1}^\infty e_{n}(s)\overline{e_{n}(a)}=\zeta(s+\overline{a}), \hbox{\ with}\  a, s \in \C_{1/2},
$$
where $\zeta$ denotes the Riemann zeta function.
\end{itemize}

Let $H(\Omega)$ denote the space of all analytic functions defined on $\Omega$. If $\varphi:\,\C_{1/2}\to \C_{1/2}$
is analytic, then the composition operator
$$C_\varphi: \,\mathcal{H}^2\to H( \C_{1/2}),\quad C_{\varphi}(f)=f\circ \varphi,
$$
is well defined and we wish to know for which ``symbols" $\varphi$ this operator maps $\mathcal{H}^2$ to itself. Then, $C_\varphi$ is a bounded linear operator on $\mathcal{H}^2$ by the closed graph theorem. A complete answer to this fairly delicate question was obtained in \cite{GOHE}. A slightly improved version of the same may be stated in the following form, as far as uniform convergence on all half-planes $\C_\eps$ is concerned. See \cite{QSE} for details.

\begin{Thm}\label{gh} The analytic function $\varphi:\,\C_{1/2}\to  \C_{1/2}$ induces a bounded composition operator on  $ \mathcal{H}^2$ if and only if
\begin{equation}\label{phi-expn}
\varphi(s)=c_{0}s+\sum_{n=1}^\infty c_n n^{-s}=: c_{0}s+\psi(s),
\end{equation}
where $c_0\in \N_0$ and the Dirichlet series $\sum\limits_{n=1}^\infty c_n n^{-s}$ converges uniformly in each half-plane $\C_\eps,\ \eps>0$.
 Moreover, $\psi$ has the following mapping properties:
\begin{enumerate}
\item If $c_0\geq 1$, then $\psi(\C_0)\subset \C_0 \hbox{\ and so}\ \varphi(\C_0)\subset \C_0$.
\item If $c_0=0$, then $\psi(\C_0)=\varphi(\C_0)\subset \C_{1/2}$.
\end{enumerate}
\end{Thm}

%\smallskip

In addition to the above formulation, it is worth to mention that $\Vert C_\varphi\Vert\geq 1$ and
$$\Vert C_\varphi\Vert=1 \Longleftrightarrow   c_0\geq 1.
$$
This result follows easily from the fact that $C_\varphi$ is contractive on $ \mathcal{H}^2$ if $c_0\geq 1$ (See \cite{GOHE}).

%%%%%%%%%%%%%%%%%%%%%%%%
\section{A special, but interesting case}\label{MPQ3Sec3}
 To our knowledge, except the recent work of Brevig \cite{BR} in a slightly different context, no result has appeared in the literature on sharp evaluations of the norm of $C_\varphi$ when $c_0=0$. The purpose of this work is to make some attempt, in the apparently simple-minded case
\begin{equation}\label{aff}
\varphi(s)=c_1+c_{2}2^{-s} \hbox{\ with}\ {\rm Re}\,  c_1\geq \frac{1}{2}+|c_2|.
\end{equation}
The condition on $c_1$ and $c_2$ in (\ref{aff}) is the exact translation of the mapping conditions of ``affine map" to be a map of $\C_0$ into $\C_{1/2}$.

We should point out the fact that, even though the symbol $\varphi$ is very simple, the boundedness of $C_\varphi$, and its norm, are far from being clear. This is already the case for affine maps $\varphi(z)=az+b$ from $\D\to \D$ whose exact norm has a complicated expression first obtained by Cowen \cite{CO} and then by the third-named author of this article (see \cite{HE}) with a simpler approach based on an adequate use of the Schur test, which we recall  in Lemma \ref{schur} below, under an adapted form.

Finally, we would like to mention the following: In \cite{HU}, Hurst obtained the norm of $C_\varphi$ on weighted Bergman spaces for the affine symbols whereas in \cite{HAM}, Hammond obtained a representation for the norm of $C_\varphi$ on the Dirichlet space for such affine symbols.

\begin{lemme}\cite[page~24]{HA}\label{schur}
Let $A=(a_{i,j})_{i\geq 0, j\geq 1}$ be a scalar matrix, formally defining a linear map $A:\,\ell^{2}(\N)\to \ell^{2}(\N_0)$ by the formula
$A(x)=y \hbox{\ with}\ y_i=\sum\limits_{j=1}^{\infty} a_{i,j} x_j$.  Assume that there exist two positive numbers $\alpha$ and $\beta$ and  two sequences $(p_i)_{i\geq 0}$ and $(q_j)_{j\geq 1}$ of positive numbers such that
\begin{equation}\label{a}
\sum\limits_{i=0}^{\infty} |a_{i,j}|q_i\leq \alpha p_j ~\mbox{ for all }~ j\geq 1
\end{equation}
and
\begin{equation}\label{b}
\sum\limits_{j=1}^{\infty} |a_{i,j}|p_j\leq \beta q_i  ~\mbox{ for all }~ i\geq 0.
\end{equation}
Then
$\Vert A\Vert\leq \sqrt{\alpha \beta}.$
\end{lemme}
% % \hfill $\square$
\begin{remarque}\label{spec-radius}
Let $\varphi$ be a map as in \eqref{aff}. Then $C_\varphi$ is compact operator on $ \mathcal{H}^2$ if and only if ${\rm Re}\,  c_1 > \frac{1}{2}+|c_2|$
(see \cite[Corollary 3]{BA}). Also the spectrum of $C_\varphi$ is
$$\sigma(C_\varphi)= \{0,1\} \cup \{[\varphi '(\alpha)]^k:\, k\in \N \},
$$
where $\alpha$ is the fixed point in $\C_{1/2}$ (see \cite[Theorem 4]{BA}). Since the spectrum  $\sigma(C_\varphi)$ is compact, we have $|\varphi '(\alpha)|< 1$ and thus the spectral radius
$$r(C_\varphi):= \sup \{|\lambda| :\, \lambda \in \sigma(C_\varphi) \}
$$
is equal to $1$.
\end{remarque}

In \cite{Hed}, Hedenmalm asked for estimate from above for the norm $\|C_\varphi\|$ in terms of $\varphi(+\infty)$, that is, $c_1$ for the map $\varphi(s)=c_1+c_{2}2^{-s}$. We give a partial answer to his question at least for this special choice of $\varphi$.
To do this, we list below some useful lemmas here.

\begin{lemme}\label{rely} Let $s>1$. Then, we have
 $$ \frac{1}{s-1}\leq \zeta(s)\leq  \frac{s}{s-1}.
 $$
% \begin{equation}\label{2}
%\sum_{n=1}^\infty  \frac{(\log n)^{i}}{n^b}
%=(-1)^{i}\zeta^{(i)}(b)\leq \frac{i!}{(b-1)^{i+1}} \hbox{\ for}\ i\geq 1.
%\end{equation}
\end{lemme}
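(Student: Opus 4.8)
The plan is to derive both inequalities from the integral comparison for the series $\zeta(s)=\sum_{n=1}^\infty n^{-s}$ when $s>1$. Since $t\mapsto t^{-s}$ is positive and strictly decreasing on $(0,\infty)$, the standard integral test gives, for each $n\geq 1$,
\[
\int_n^{n+1} t^{-s}\,dt \leq n^{-s} \leq \int_{n-1}^{n} t^{-s}\,dt
\]
where the right-hand inequality is used only for $n\geq 2$. Summing the left-hand inequality over all $n\geq 1$ yields
\[
\zeta(s)\geq \int_1^\infty t^{-s}\,dt = \frac{1}{s-1},
\]
which is the lower bound. For the upper bound, I would isolate the first term and sum the right-hand inequality over $n\geq 2$:
\[
\zeta(s) = 1 + \sum_{n=2}^\infty n^{-s} \leq 1 + \int_1^\infty t^{-s}\,dt = 1 + \frac{1}{s-1} = \frac{s}{s-1}.
\]
That completes both estimates.

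The only mild subtlety is making sure the convergence of the improper integral $\int_1^\infty t^{-s}\,dt$ is invoked correctly, which is immediate for $s>1$ since $\int_1^N t^{-s}\,dt = \frac{1-N^{1-s}}{s-1}\to \frac{1}{s-1}$ as $N\to\infty$; there is no real obstacle here. One could alternatively phrase everything through partial sums $\sum_{n=1}^N n^{-s}$ and pass to the limit $N\to\infty$, which avoids even mentioning improper integrals of infinite extent, but the argument is the same. I would present the computation in the two-line form above, since it is short and self-contained, and note in passing that the lower bound is in fact strict but we only need the non-strict version stated in Lemma \ref{rely}.
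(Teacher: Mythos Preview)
Your proof is correct and follows essentially the same approach as the paper: both use the integral comparison for the decreasing function $t\mapsto t^{-s}$ to sandwich $\zeta(s)$ between $\int_1^\infty t^{-s}\,dt=\frac{1}{s-1}$ and $1+\int_1^\infty t^{-s}\,dt=\frac{s}{s-1}$. The paper states the sandwich inequality in one line, while you unpack it term by term, but the argument is identical.
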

 \begin{proof}
 The result follows, by comparison with an integral, from the fact that $x\mapsto x^{-s}$ is decreasing for $s>1$. % \hfill $\square$
 See for instance, \cite[p.~299]{samy}. Indeed for $f(x)=x^{-s}=e^{-s \ln x}$, we have
 $$
 \int_{1}^{\infty}f(x)dx \leq \sum_{k=1}^\infty f(k) \leq f(1)+\int_{1}^{\infty}f(x)dx,
 $$
 from which one can obtain the desired inequality, since
 $\int_{1}^{\infty}f(x)dx=\frac{1}{s-1}$.
 \end{proof}

\begin{lemme}\label{zetabound}
For all $ s>1$, we have
\begin{equation}\label{eq-lem1}
\frac{1}{s-1}+\left(\frac{s-1}{s}\right)\frac{1}{\sqrt{2\pi}}\leq \zeta(s) .
\end{equation}
\end{lemme}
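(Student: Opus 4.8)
The plan is to obtain the stated lower bound for $\zeta(s)$ by keeping the first term of the Dirichlet series explicitly and bounding the tail $\sum_{k=2}^\infty k^{-s}$ from below by a suitable integral. Writing $\zeta(s)=1+\sum_{k=2}^\infty k^{-s}$ and using the fact that $x\mapsto x^{-s}$ is decreasing for $s>1$, I would compare the tail with $\int_{2}^{\infty}x^{-s}\,dx=\frac{2^{1-s}}{s-1}$, which gives $\zeta(s)\ge 1+\frac{2^{1-s}}{s-1}$. This is not yet of the required shape, so the real work is to show $1+\frac{2^{1-s}}{s-1}\ge \frac{1}{s-1}+\frac{s-1}{s}\cdot\frac{1}{\sqrt{2\pi}}$, equivalently $\frac{1-2^{1-s}}{s-1}+1-\frac{s-1}{s}\cdot\frac{1}{\sqrt{2\pi}}\ge 0$ for all $s>1$.

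For the analytic inequality I would treat the two regimes of $s$ separately. When $s$ is away from $1$, say $s\ge 2$, the term $\frac{1-2^{1-s}}{s-1}$ is bounded below by something explicit while $\frac{s-1}{s}\cdot\frac{1}{\sqrt{2\pi}}<\frac{1}{\sqrt{2\pi}}<1$, so the $1$ in the expression absorbs it comfortably. When $s$ is close to $1$, write $s=1+\eps$ with $\eps$ small: then $\frac{1-2^{-\eps}}{\eps}\to \ln 2$ as $\eps\to 0^+$, so $1+\frac{2^{1-s}}{s-1}-\frac{1}{s-1}=1-\frac{1-2^{-\eps}}{\eps}\to 1-\ln 2\approx 0.307$, while $\frac{s-1}{s}\cdot\frac{1}{\sqrt{2\pi}}=\frac{\eps}{1+\eps}\cdot\frac{1}{\sqrt{2\pi}}\to 0$; so near $s=1$ the inequality has a strict margin. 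The function $\eps\mapsto \frac{1-2^{-\eps}}{\eps}$ is decreasing on $(0,\infty)$, hence $1-\frac{1-2^{-\eps}}{\eps}\ge 1-\ln 2$ for all $\eps>0$, and it then suffices to note $1-\ln 2\ge \frac{1}{\sqrt{2\pi}}$, since $\frac{\eps}{(1+\eps)\sqrt{2\pi}}<\frac{1}{\sqrt{2\pi}}$; indeed $\frac{1}{\sqrt{2\pi}}\approx 0.399$ and $1-\ln 2\approx 0.307$, so this crude step fails and a slightly more careful splitting is needed.

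Because of that gap, the cleaner route is probably: use $\zeta(s)-\frac{1}{s-1}\ge 1-\frac{1-2^{-(s-1)}}{s-1}$ and compare this directly with $\frac{s-1}{s}\cdot\frac{1}{\sqrt{2\pi}}$ as functions of $t:=s-1>0$. Set $g(t)=1-\frac{1-2^{-t}}{t}-\frac{t}{(1+t)\sqrt{2\pi}}$. One checks $g(0^+)=1-\ln 2>0$ and $g(+\infty)=1-\frac{1}{\sqrt{2\pi}}>0$, so it remains to rule out an interior sign change; I would do this by showing $g$ is, say, monotone or convex on the relevant range, or simply by bounding $\frac{1-2^{-t}}{t}\le \ln 2$ (valid for all $t>0$ by concavity of $x\mapsto 2^{-x}$, i.e.\ the secant-below-tangent inequality at $0$) together with $\frac{t}{1+t}\le \min\{t,1\}$, and then verifying $1-\ln 2\cdot\frac{t}{?}\ldots$. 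In any case the whole thing reduces to a single-variable calculus exercise, and I would finish it by a short monotonicity/derivative argument.

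The main obstacle is exactly this last elementary estimate: the constant $\frac{1}{\sqrt{2\pi}}\approx 0.399$ is uncomfortably close to (in fact a bit larger than) $1-\ln 2\approx 0.307$, so the naive bound is not quite enough and one must exploit the factor $\frac{s-1}{s}<1$ more carefully — the inequality $\frac{s-1}{s}\cdot\frac{1}{\sqrt{2\pi}}$ is small both near $s=1$ (because of $s-1$) and is never more than $\frac{1}{\sqrt{2\pi}}$, and one must check that $1-\frac{1-2^{-(s-1)}}{s-1}$ dominates it uniformly. I expect the proof to combine the integral comparison (to produce the explicit $\frac{2^{1-s}}{s-1}$ term) with a one-line convexity bound on $\frac{1-2^{-t}}{t}$ and a short case split on the size of $s-1$.
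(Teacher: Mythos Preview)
Your route is viable but genuinely different from the paper's. The paper does not use the integral bound $\zeta(s)\ge 1+\frac{2^{1-s}}{s-1}$ at all; instead it splits at $s=3$. For $s\ge 3$ it simply observes that $h(s):=\frac{1}{s-1}+\frac{s-1}{s\sqrt{2\pi}}$ is decreasing, so $h(s)\le h(3)<1<\zeta(s)$. For $1<s<3$ it imports a lemma of Brevig giving $\zeta(1+x)\ge \frac{1}{x}+g(x)$ with the explicit cubic $g(x)=\frac{1}{6!}(414+49x-6x^2-x^3)$, and then checks the elementary polynomial inequality $\frac{x}{(x+1)\sqrt{2\pi}}\le g(x)$ on $(0,2)$ via monotonicity of both sides and the numerical comparison $f(2)<g(0)$. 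So the paper's proof is short but leans on an external estimate; your argument is self-contained, at the cost of the final one-variable inequality you left open.

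That remaining step can indeed be closed by the case split you anticipate; here is one that works. With $t=s-1>0$, set
\[
L(t)=1-\frac{1-2^{-t}}{t},\qquad R(t)=\frac{t}{(1+t)\sqrt{2\pi}}.
\]
Since $1-e^{-u}\le u$, one has $1-2^{-t}\le t\ln 2$, hence $L(t)\ge 1-\ln 2$ for all $t>0$ (note: $2^{-x}$ is \emph{convex}, not concave, but the bound you wanted is correct). Now split at $t=3$. For $0<t\le 3$, $R$ is increasing, so
\[
R(t)\le R(3)=\frac{3}{4\sqrt{2\pi}}\approx 0.2992<0.3069\approx 1-\ln 2\le L(t).
\]
For $t>3$,
\[
L(t)>1-\frac{1}{t}>\frac{2}{3}>\frac{1}{\sqrt{2\pi}}>R(t).
\]
So $L(t)\ge R(t)$ for all $t>0$, which is exactly the inequality you needed. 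Thus your programme goes through; the only missing ingredient was pinning down a concrete split point, and $t=3$ does the job with comfortable numerical margins.
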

\begin{proof}
Let
$$h(s)=\frac{1}{s-1}+\left(\frac{s-1}{s}\right)\frac{1}{\sqrt{2\pi}}.
$$
Then, we observe that  both $h$ and $\zeta $ are decreasing functions on $(1,\infty).$
% Also, $h(s)=1$ is equivalent to
%$\alpha s^2-2\alpha s-1=0,$
%where $\alpha=\sqrt{2\pi}-1$. This gives the root $s=s_1$ with $s_1=1+\sqrt{(1+\alpha)/\alpha}$, since $s>1$.
Thus,
$$h(s)\leq h(3)=\frac{1}{2}+\frac{1}{3}\sqrt{\frac{2}{\pi}}<\frac{1}{2}+\frac{1}{3}< 1<\zeta(s) ~\mbox{ for all $s\geq 3$}.
$$
This shows that the inequality \eqref{eq-lem1} is true for $s\geq 3$.
Now we need to verify the inequality \eqref{eq-lem1} only for $1<s<3$. By setting $s=x+1$,
it is enough to prove that
$$h(x+1)=\frac{1}{x}+f(x)\leq \zeta(x+1) \mbox{~for~} 0<x<2,
$$
where
$$f(x)=\frac{1}{\sqrt{2\pi}}\left(\frac{x}{x+1}\right).
$$
Clearly, $f$ is an increasing function on $x>0$. From \cite[Lemma 10]{BR}, we have
 $$\frac{1}{x}+ g(x)\leq \zeta(1+x) ~\mbox{ for $x>0$,}
 $$
where
$$
g(x)=\frac{1}{2}+\frac{x+1}{12}-\frac{(x+1)(x+2)(x+3)}{6!}=\frac{1}{6!}(414+49x-6x^2-x^3).
$$
In view of \cite[Lemma 10]{BR}, it suffices to show that $f(x)\leq g(x)$ on $(0,2)$.
For $0<x<2$,
$$g'(x)= \frac{1}{6!}(49-3x(x+4))>0,
$$
which shows that $g$ is increasing on $(0,2)$.
Since
$$f(2)=\frac{1}{3}\sqrt{\frac{2}{\pi}}<\frac{1}{3}<g(0)=\frac{23}{40},
$$
we have $f(x)\leq f(2)\leq g(0)\leq g(x)$ for all $0<x<2.$
This proves the claim for $0<x<2$, i.e., $1<s<3$.
In conclusion, the  inequality \eqref{eq-lem1} is verified for all $s>1$.
\end{proof}

\begin{figure}
\begin{center}
\includegraphics[width=12.0cm]{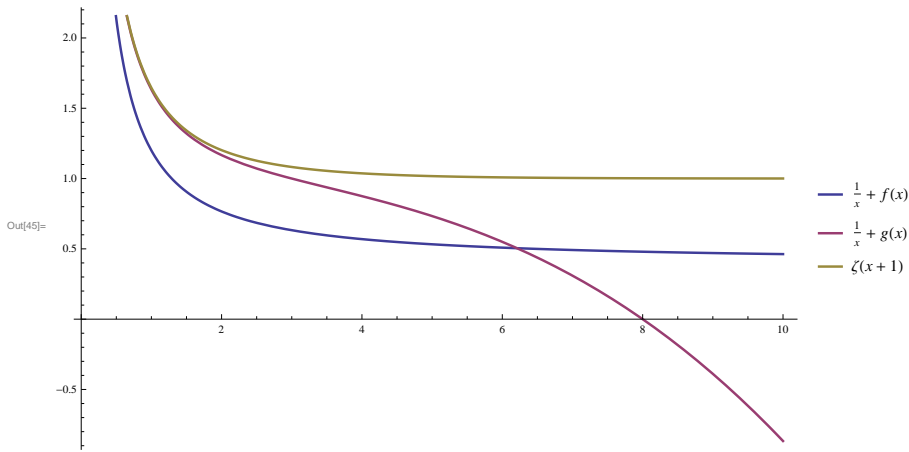}
\caption{The range for $x$ varies from $0.1$ to $10$\label{fig2}}
\end{center}
\end{figure}

\begin{remarque}\label{comparision}
Consider the functions $f$ and $g$ as in Lemma \ref{zetabound}. Thus, $\frac{1}{x}+ f(x)$ and $\frac{1}{x}+ g(x)$
 both forms a lower bound for $\zeta(1+x)$ for $x>0$. For $x>3$, we have
$$g'(x)= -\frac{1}{6!}(3x(x+4)-49)<0,
$$
which shows that $g$ is decreasing on $(3,\infty)$ and therefore, $g(x)\leq f(x)$ for all $x>s_2\approx 6.2102$,
where $s_2$ is the unique positive root of the equation given by $f(x)=g(x)$, i.e.,
 $$
 \left(\frac{x}{x+1}\right)\frac{1}{\sqrt{2\pi}}=\frac{1}{2}+\frac{x+1}{12}-\frac{(x+1)(x+2)(x+3)}{6!}.
 $$
%It is worth recalling that Lemma \ref{zetabound} gives a lower bound for zeta function
% $$ \frac{1}{x}+ f(x)=
% \frac{1}{x}+\left(\frac{x}{x+1}\right)\frac{1}{\sqrt{2\pi}}\leq \zeta(1+x) \mbox{~for~} x>0,
% $$
% and \cite[Lemma 10]{BR} gives that
% $$ \frac{1}{x}+ g(x)=
% \frac{1}{x}+\frac{1}{2}+\frac{x+1}{12}-\frac{(x+1)(x+2)(x+3)}{6!}\leq \zeta(1+x) \mbox{~for~} x>0.
% $$
It follows that Lemma \ref{zetabound} is an improved version of \cite[Lemma 10]{BR} for $x\geq s_2$.
For a quick comparison with the zeta function, in Figure \ref{fig2}, we have drawn the graphs of
$(1/x)+f(x)$, $(1/x)+g(x)$ and $\zeta (x+1) $.
 \end{remarque}

\begin{remarque}
Before seeing the work of \cite{BR},  we made use of a result of Lavrik \cite{Lav}: For $1<s<3$,
$$\zeta(s)-\frac{1}{s-1}-\gamma=\sum\limits_{n=1}^\infty \frac{\gamma_n}{n!} (s-1)^n,$$ where $\gamma$ is the Euler constant and  $|\gamma_n|\leq \frac{n!}{2^{n+1}}$. We thus obtained an alternative proof of \eqref{eq-lem1}.
\end{remarque}

\begin{lemme}\label{bound} If $s>1$, $i\geq 1$ is an integer, and $f(x)=\frac{(\log x)^{i}}{x^s}$, then one has
$$ \sum_{k=1}^\infty f(k)\leq \frac{i!}{(s-1)^{i}}\zeta(s).$$
\end{lemme}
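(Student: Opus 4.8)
The plan is to estimate the sum $\sum_{k=1}^\infty f(k)$ with $f(x)=(\log x)^i/x^s$ by comparison with the integral $\int_1^\infty f(x)\,dx$, exactly in the spirit of the proof of Lemma \ref{rely}. The subtlety is that $f$ is not monotone on all of $[1,\infty)$: it vanishes at $x=1$, increases up to its maximum at $x_0=e^{i/s}$, and only then decreases. So the clean inequality $\sum_{k\ge 1} f(k)\le f(1)+\int_1^\infty f(x)\,dx$ is not immediately available. First I would split the sum as $\sum_{k\le x_0} f(k)+\sum_{k>x_0} f(k)$; on the decreasing part the integral comparison $\sum_{k>x_0}f(k)\le \int_{x_0}^\infty f(x)\,dx$ is valid, and on the (finite) increasing part $\sum_{k\le x_0} f(k)\le \int_{1}^{x_0}f(x)\,dx + f(x_0)$, with the extra term absorbed into the final constant. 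In fact a cleaner route that avoids bookkeeping: since $f(x)\ge 0$ and $f$ is eventually decreasing, one still has $\sum_{k=1}^\infty f(k) \le \int_1^\infty f(x)\,dx + \max_{x\ge 1} f(x)$, and both terms are of the stated order; but to land exactly on the bound $\frac{i!}{(s-1)^i}\zeta(s)$ one wants to reroute through $\zeta$ rather than through $\int_1^\infty$.

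The key computation is the integral $\int_1^\infty \frac{(\log x)^i}{x^s}\,dx$. Substituting $x=e^t$ gives $\int_0^\infty t^i e^{-(s-1)t}\,dt = \Gamma(i+1)/(s-1)^{i+1} = i!/(s-1)^{i+1}$. This already shows $\sum_{k\ge 1} f(k)$ is $O\!\big(i!/(s-1)^{i+1}\big)$, but the target has $\zeta(s)$ in place of one factor of $1/(s-1)$, and by Lemma \ref{rely} we have $\zeta(s)\ge 1/(s-1)$, so the claimed bound is (slightly) stronger than the crude integral bound near $s=1$. The way to get the $\zeta(s)$ is to compare termwise, not against a single integral, but against a shifted sum: the idea is to write, for each $k\ge 1$,
\[
\frac{(\log k)^i}{k^s} \le \frac{i!}{(s-1)^i}\cdot\frac{1}{k^s} \cdot (\text{something} \le 1),
\]
which cannot hold uniformly in $k$ as stated (the left side exceeds $i!/(s-1)^i \cdot k^{-s}$ for $k$ large when $s$ is close to $1$), so instead one uses the convexity/Abel-summation identity $(\log k)^i = i\int_1^k (\log u)^{i-1} u^{-1}\,du$ and induction on $i$.

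Here is the inductive skeleton I would actually write. For $i=1$: $\sum_{k\ge 1}\frac{\log k}{k^s} = -\zeta'(s)$, and one checks $-\zeta'(s)\le \frac{\zeta(s)}{s-1}$ directly, e.g. from $-\zeta'(s)=\sum \frac{\log k}{k^s}$ and $\log k \le \frac{k^{s-1}-1}{s-1}\cdot 1 \le \frac{1}{s-1}$ is false for large $k$ — so instead use $\log k = \int_1^k \frac{du}{u} \le \int_1^k u^{s-2}\,du/1$... the honest statement is $\log k \le \frac{k^{\,s-1}}{s-1}$ for $s>1$ (since $\log k=\int_1^k u^{-1}du\le \int_1^k u^{s-2}du \le \frac{k^{s-1}}{s-1}$), which gives $\frac{\log k}{k^s}\le \frac{1}{(s-1)k}$ — not summable. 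This confirms the bound must come from the full sum structure, so for the inductive step I would instead use: from $(\log k)^i \le i \int_1^{k}(\log u)^{i-1}\frac{du}{u}$ one gets, by interchanging sum and integral and bounding $\sum_{k>u} k^{-s}\le \int_{u-1}^\infty t^{-s}dt \le \frac{u^{1-s}}{(s-1)}$ together with the $k\le u$ part,
\[
\sum_{k\ge 1}\frac{(\log k)^i}{k^s} \le i\int_1^\infty (\log u)^{i-1}\,u^{-1}\Big(\sum_{k\ge u}k^{-s}\Big)\,du \le \frac{i}{s-1}\int_1^\infty \frac{(\log u)^{i-1}}{u^{s}}\,du\;\cdot\;(\text{bounded by }\zeta(s)\text{-type factor}),
\]
and then feed in the induction hypothesis for exponent $i-1$. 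The main obstacle is exactly this: getting the $\zeta(s)$ rather than a bare $1/(s-1)$ in the last slot, which forces one to keep a genuine sum (not an integral) somewhere in the estimate; the cleanest implementation is to prove by induction on $i$ that $\sum_{k\ge 1}\frac{(\log k)^i}{k^s}\le \frac{i}{s-1}\sum_{k\ge 1}\frac{(\log k)^{i-1}}{k^s}$, with the base case $i=0$ being the trivial identity $\sum k^{-s}=\zeta(s)$, and the inductive inequality coming from $(\log k)^i-(\log(k-1))^i \le \frac{i(\log k)^{i-1}}{k}\cdot\frac{k}{k-1}$-type telescoping compared against $k^{-s}$. Telescoping/Abel summation is the workhorse; I expect the one real check to be the termwise inequality driving the induction, and everything else is routine.
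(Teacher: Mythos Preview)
Your first paragraph already contains the paper's actual argument: you correctly arrive at
\[
\sum_{k\ge 1} f(k)\ \le\ \int_1^\infty f(x)\,dx + \max_{x\ge 1} f(x)\ =\ \frac{i!}{(s-1)^{i+1}}+f(e^{i/s}),
\]
and this is precisely the decomposition the authors use. The trouble starts in your second paragraph, where you assert that ``the claimed bound is (slightly) stronger than the crude integral bound near $s=1$''. This is backwards: since $\zeta(s)\ge \frac{1}{s-1}$ by Lemma~\ref{rely}, the target $\frac{i!}{(s-1)^i}\zeta(s)$ is \emph{larger} than $\frac{i!}{(s-1)^{i+1}}$, i.e.\ the claimed bound is \emph{weaker} than the pure integral term. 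There is nothing to ``reroute through $\zeta$'': one only has to show that the correction term $f(e^{i/s})$ fits inside the gap $\frac{i!}{(s-1)^i}\zeta(s)-\frac{i!}{(s-1)^{i+1}}$.

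That is exactly what the paper does, and it is the step you are missing. Stirling's inequality $(i/e)^i\le i!/\sqrt{2\pi i}$ gives $f(e^{i/s})=(i/(es))^i\le \dfrac{i!}{\sqrt{2\pi i}\, s^i}$, hence
\[
\sum_{k\ge 1} f(k)\ \le\ \frac{i!}{(s-1)^i}\left[\frac{1}{s-1}+\Big(\frac{s-1}{s}\Big)^{i}\frac{1}{\sqrt{2\pi i}}\right]\ \le\ \frac{i!}{(s-1)^i}\left[\frac{1}{s-1}+\frac{s-1}{s}\cdot\frac{1}{\sqrt{2\pi}}\right],
\]
and the bracket is $\le \zeta(s)$ by Lemma~\ref{zetabound}. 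You never invoke Lemma~\ref{zetabound}; it is the key input that turns the integral-plus-maximum estimate into the stated bound, and it is the whole reason that lemma appears in the paper.

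The inductive route you sketch in the second half is not the paper's approach and, as written, is not a proof: the central inequality $\sum_{k}(\log k)^i k^{-s}\le \frac{i}{s-1}\sum_{k}(\log k)^{i-1}k^{-s}$ is asserted but never established, and each of the termwise attempts you try out (and correctly discard) fails. Whether that inductive inequality can be salvaged is beside the point; once the sign confusion above is repaired, the proof is already finished in your first paragraph plus Stirling plus Lemma~\ref{zetabound}.
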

\begin{proof}
 %We follow the notes of Perumal Muthukumar (PM for short in this informal presentation) with a (temporary?) change of notation.
 The function $f$ increases for $x\leq e^{i/s}$ and then decreases for $x\geq e^{i/s}$. By a simple change of
 variables, we have %method
 $$I=\int_{1}^\infty f(x)dx= \frac{i!}{(s-1)^{i+1}}\cdot
 $$
 Let $N\geq 1$ be the integral part of $e^{i/s}$, so that $N\leq e^{i/s}<N+1$. Computations give, with help of
 Stirling's inequality $(i/e)^{i}\leq \frac{i!}{\sqrt{2\pi i}}$:
 $$%\begin{equation}\label{one}
 \sum_{k=1}^{N-1} f(k)\leq \int_{1}^N f(x)dx
 $$%\end{equation}
 and
 %\begin{equation}\label{two} \sum_{k=N+1}^{\infty} f(k)\leq \int_{N}^\infty f(x)dx.\end{equation}
% \begin{equation}\label{three} f(N)\leq f(e^{n/s})=\frac{(i/s)^i}{e^i}\leq \frac{i!}{\sqrt{2\pi i} s^i}\leq \frac{i!}{\sqrt{2\pi } (s-1)^i}\cdot\end{equation}
% Adding those three inequalities, we get the claimed result without the constant $2$ in PM, if we are not mistaken.
% \begin{equation}\label{add} S\leq I+f(N)\leq I+ f(e^{i/s})\leq \frac{i!}{(s-1)^{i+1}}\Big(1+ \frac{s-1}{\sqrt{2\pi }}\Big).\end{equation}
% \hfill $\square$
$$%\begin{equation}\label{two}
\sum_{k=N+2}^{\infty} f(k)\leq \int_{N+1}^\infty f(x)dx.
$$%\end{equation}
It follows that
$$
\int_{N}^{N+1}f(x)dx \ge \left \{\begin{array}{ll}
f(N) &\mbox{ if $f(N)\leq f(N+1)$}\\
f(N+1)  &\mbox{ otherwise},
\end{array}
\right .
$$
%If $f(N)\leq f(N+1)$, then we obtain
%$$f(N)\leq \int_{N}^{N+1}f(x)dx.
%$$
%Otherwise, we have
%$$f(N+1)\leq \int_{N}^{N+1}f(x)dx.
%$$
and therefore,
$$%\begin{equation}\label{three}
f(N)+f(N+1)-\int_{N}^{N+1}f(x)dx \leq f(e^{i/s})=\frac{(i/s)^i}{e^i}\leq \frac{i!}{\sqrt{2\pi i} s^i}\cdot
$$%\end{equation}
From the above three inequalities, we get that
\beqq
 \sum_{k=1}^\infty f(k)&\leq& I+ f(e^{i/s})\\
&\leq& i!\left[\frac{1}{(s-1)^{i+1}}+ \frac{1}{\sqrt{2\pi i} s^i}\right]\\
&\leq& \frac{i!}{(s-1)^{i}}\left[\frac{1}{s-1} + \frac{1}{\sqrt{2\pi }}\Big(\frac{s-1}{s}\Big) \right]\\
&\leq& \frac{i!}{(s-1)^{i}} \zeta(s).
\eeqq
The third and the fourth inequalities follow from $\frac{s-1}{s}<1$ and Lemma \ref{zetabound}, respectively. This completes the proof of the lemma.
\end{proof}

 %\textcolor{red}{Do we use this lemma 3.5? }
%
% \textcolor{green}{In this version, we use it in Theorem \ref{approx. no}}
%\begin{lemme}\label{trig-min} For any real numbers $a$ and $b$,
%$$ \min_{y\in \R} \{ a \cos (\ln 2) y + b \sin (\ln 2) y \} = -\sqrt{a^2+b^2}.$$

%\end{lemme}
%\begin{proof}
%The result is a consequence of basic trigonometry. % \hfill $\square$
%\end{proof}

%\bigskip
%\subsection{Norm Estimate}

Our next result provides bounds for the norm estimate of $C_\varphi$ on both sides.

\begin{theoreme}\label{main one} Let  $\varphi(s)=c_1+c_{2}2^{-s}  \hbox{\ with}\ {\rm Re}\, c_1\geq \frac{1}{2}+|c_2|$ and $c_2\neq 0$, thus inducing a bounded composition operator $C_\varphi:\,\mathcal{H}^2\to \mathcal{H}^2$. Then, we have
$$\zeta(2{\rm Re}\,  c_1)\leq \Vert C_\varphi\Vert ^2 \leq  \zeta(2{\rm Re}\,  c_1-r|c_2|),$$
where $r\leq 1$ is the smallest positive root of the quadratic polynomial
$$P(r)=|c_2| r^2+ (1-2{\rm Re}\,  c_1) r+ |c_2|.
$$
\end{theoreme}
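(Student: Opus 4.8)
The plan is to represent $C_\varphi$ and $C_\varphi^\ast C_\varphi$ as matrices in the orthonormal basis $\{e_n\}$ and then apply the Schur test (Lemma \ref{schur}) for the upper bound, while the lower bound will come from evaluating the quadratic form on a single reproducing kernel. For the lower bound, since $\Vert C_\varphi\Vert^2 \geq \Vert C_\varphi^\ast K_a\Vert^2/\Vert K_a\Vert^2$ and $C_\varphi^\ast K_a = K_{\varphi(a)}$, letting $\mathrm{Re}\, a \to +\infty$ and using $K_a(s)=\zeta(s+\bar a)$ together with $\varphi(+\infty)=c_1$ should give $\Vert C_\varphi\Vert^2 \geq \zeta(2\,\mathrm{Re}\, c_1)$; one has to check the limit of $\Vert K_{\varphi(a)}\Vert^2/\Vert K_a\Vert^2 = \zeta(2\,\mathrm{Re}\,\varphi(a))/\zeta(2\,\mathrm{Re}\, a)$ carefully, using $\zeta(\sigma)\to 1$ as $\sigma\to\infty$.

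For the upper bound, the main work is to compute the entries of the matrix of $C_\varphi$ with respect to $\{e_n\}$. Since $C_\varphi(e_n)(s) = n^{-\varphi(s)} = n^{-c_1}(n^{-c_2 2^{-s}}) = n^{-c_1}\sum_{j\geq 0}\frac{(-c_2\log n)^j}{j!}2^{-js}$, the matrix is supported on columns indexed by $n$ and rows indexed by powers $2^j$, with entries $a_{j,n} = n^{-c_1}\frac{(-c_2\log n)^j}{j!}$ (and $0$ in rows not of the form $2^j$). So $\Vert C_\varphi\Vert = \Vert A\Vert$ where $A=(a_{j,n})_{j\geq 0, n\geq 1}$ acts $\ell^2(\mathbb{N})\to\ell^2(\mathbb{N}_0)$, and $|a_{j,n}| = n^{-\mathrm{Re}\, c_1}\frac{(|c_2|\log n)^j}{j!}$. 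Now I would apply Lemma \ref{schur} with weights of the form $p_n = n^{-\theta}$ and $q_j = \rho^j$ for suitable $\theta>0$ and $\rho>0$ to be optimized. Condition \eqref{b} becomes $\sum_{j\geq 0} n^{-\mathrm{Re}\, c_1}\frac{(|c_2|\log n)^j}{j!}\rho^j \cdot n^{-\theta}$... wait, I need $\sum_j |a_{j,n}| q_j \le \alpha p_n$ and $\sum_n |a_{j,n}| p_n \le \beta q_j$. The first sum is $\sum_j n^{-\mathrm{Re}\, c_1}\frac{(|c_2|\log n)^j}{j!}\rho^j = n^{-\mathrm{Re}\, c_1} e^{\rho|c_2|\log n} = n^{\rho|c_2|-\mathrm{Re}\, c_1}$, so choosing $p_n = n^{\rho|c_2|-\mathrm{Re}\, c_1}$ gives $\alpha=1$. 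The second sum is $\sum_n n^{-\mathrm{Re}\, c_1}\frac{(|c_2|\log n)^j}{j!}\cdot n^{\rho|c_2|-\mathrm{Re}\, c_1} = \frac{|c_2|^j}{j!}\sum_n \frac{(\log n)^j}{n^{2\mathrm{Re}\, c_1 - \rho|c_2|}}$, which by Lemma \ref{bound} (with $s = 2\,\mathrm{Re}\, c_1 - \rho|c_2|$, provided $s>1$) is at most $\frac{|c_2|^j}{j!}\cdot\frac{j!}{(s-1)^j}\zeta(s) = \left(\frac{|c_2|}{s-1}\right)^j\zeta(s)$. Setting $q_j = \rho^j$, condition \eqref{b} holds with $\beta = \zeta(s)$ precisely when $\frac{|c_2|}{s-1}\le\rho$, i.e. $|c_2| \le \rho(2\,\mathrm{Re}\, c_1 - \rho|c_2| - 1)$, which rearranges to $|c_2|\rho^2 + (1-2\,\mathrm{Re}\, c_1)\rho + |c_2| \le 0$, i.e. $P(\rho)\le 0$. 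So the Schur test gives $\Vert C_\varphi\Vert^2 \le \alpha\beta = \zeta(2\,\mathrm{Re}\, c_1 - \rho|c_2|)$ for any $\rho$ with $P(\rho)\le 0$.

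To finish, I would choose $\rho = r$, the smallest positive root of $P$; since $P$ is an upward parabola with $P(0)=|c_2|>0$ and the roots are reciprocals of each other (product of roots $=1$), the smallest root $r$ satisfies $r\le 1$, and $P(r)=0\le 0$, so the hypothesis of the Schur computation holds, giving $\Vert C_\varphi\Vert^2\le\zeta(2\,\mathrm{Re}\, c_1 - r|c_2|)$. I also need to check that $s = 2\,\mathrm{Re}\, c_1 - r|c_2| > 1$ so that Lemmas \ref{rely}, \ref{zetabound}, \ref{bound} apply and $\zeta(s)$ is finite: from $P(r)=0$ we get $2\,\mathrm{Re}\, c_1 - r|c_2| = \frac{|c_2|}{r} + |c_2| - r|c_2|+2\mathrm{Re}\,c_1 - \ldots$ — more cleanly, $P(r)=0$ gives $2\,\mathrm{Re}\, c_1 = |c_2|r + |c_2|/r \ge 2|c_2|$ with equality iff $r=1$, and since $\mathrm{Re}\, c_1 \ge \tfrac12 + |c_2|$ one checks $s = |c_2|/r - |c_2|r + \ldots$; concretely $s = 2\mathrm{Re}\,c_1 - r|c_2| = |c_2|/r \ge |c_2|$, and combined with $\mathrm{Re}\,c_1\ge\tfrac12+|c_2|$ one gets $s>1$ (the boundary case $\mathrm{Re}\,c_1 = \tfrac12+|c_2|$ needs a short separate check that $|c_2|/r > 1$, which holds since $r<1$ strictly unless $c_2$ forces $r=1$, in which case $s = |c_2| $ and $2\mathrm{Re}\,c_1 = 2|c_2|$ contradicts $\mathrm{Re}\,c_1\ge\tfrac12+|c_2|$ unless $|c_2|\ge\tfrac12+|c_2|$, impossible — so in fact $r<1$ strictly and $s>1$). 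The main obstacle is this bookkeeping around the quadratic root and verifying $s>1$ in the boundary case, plus making the limiting argument for the lower bound rigorous; the Schur estimate itself is routine once Lemma \ref{bound} is in hand.
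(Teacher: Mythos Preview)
Your approach is essentially identical to the paper's: represent $C_\varphi$ as the matrix $A=(a_{i,j})$ with $|a_{i,j}|=j^{-\mathrm{Re}\,c_1}\frac{(|c_2|\log j)^i}{i!}$, apply the Schur test with exactly the weights $p_j=j^{r|c_2|-\mathrm{Re}\,c_1}$, $q_i=r^i$, $\alpha=1$, $\beta=\zeta(2\,\mathrm{Re}\,c_1-r|c_2|)$, invoke Lemma~\ref{bound} for the row sums, and get the lower bound from reproducing kernels by letting $\mathrm{Re}\,a\to\infty$. (The paper first reduces to $c_1,c_2>0$ by conjugating with unitary vertical translations, whereas you work directly with $|a_{i,j}|$; both are fine.)

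The one slip is in your bookkeeping for $s=2\,\mathrm{Re}\,c_1-r|c_2|>1$. Dividing $P(r)=0$ by $r$ gives $|c_2|r+|c_2|/r=2\,\mathrm{Re}\,c_1-1$ (you dropped the $-1$), so in fact $s-1=|c_2|/r>0$ immediately, with no boundary-case analysis needed. Alternatively and more simply, since $r\le 1$ one has $s\ge 2\,\mathrm{Re}\,c_1-|c_2|\ge 1+|c_2|>1$. Your claim that $r<1$ strictly in the boundary case is actually false: when $\mathrm{Re}\,c_1=\tfrac12+|c_2|$ one has $P(1)=0$, so $r=1$ (cf.\ Corollary~\ref{equal-case}).
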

\begin{remarque}%\label{}
Observe that $P$ has two positive roots with product $1$, so one of them is less than or equal to $1$ (because $P(0)>0$ and $P(1)\leq 0$) and by our assumption
 $2{\rm Re}\,  c_1-r|c_2|\geq 2{\rm Re}\,  c_1-|c_2|\geq 1+ |c_2|>1$, so that $\zeta(2{\rm Re}\,  c_1-r|c_2|)$ is well defined.
\end{remarque}
\begin{proof}[Proof of Theorem {\rm \ref{main one}}] Without loss of generality, we can assume that $c_1$ and $c_2$ are positive.
Indeed, in  the general case, %writing $c_j=\gamma_j\, u_j$ with $|u_j|=1$ and $\gamma_j=|c_j|$, we find that  $\gamma(s)=\gamma_1+\gamma_2 2^{-s}$ and
%$$C_\varphi=UC_\gamma\,V,
%$$
%where $U$ and $V$ are unitary operators.
for $\varphi(s)=c_1+c_2 2^{-s}$, we set $c_1=\sigma_1+it_1$ and $c_2=|c_2|2^{i\varphi_2}$.
Note that ${\rm Re}\,  c_1=\sigma_1 >0$ by our assumption of the theorem.
Consider the two vertical translations $T_1$ and $T_2$ defined respectively by
$T_{1}(s)=s+it_1$ and $T_{2}(s)=s-i\varphi_2$, and set $\psi(s)=\sigma_1+|c_2|2^{-s}$. Then, one has
$\varphi=T_1 \circ \psi \circ T_2$ whence
$$ C_\varphi=C_{T_2} \circ C_\psi \circ C_{T_1}, %=UC_\psi\,V,
$$
where   $C_{T_2}$ and $C_{T_1}$ are unitary operators.
%, so that $a_{n}(C_\varphi)=a_{n}(C_\psi)$ for each $n$.

Note that $C_{\varphi}(1)=1$. Now for $j>1$, we see that
$$C_{\varphi}(j^{-s})=j^{-c_1} \exp (-c_2 2^{-s} \log j)=j^{-c_1}\sum_{i=0}^\infty \frac{(-c_2 \log j)^{i}}{i!} (2^{i})^{-s}.
$$
In other terms, considering the orthonormal system $\{(2^{i})^{-s}\}_{i\geq 0}$ as the canonical basis of the range of $C_\varphi$ and the orthonormal system $\{j^{-s}\}_{j\geq 1}$ as the canonical basis of $\mathcal{H}^2$,  $C_\varphi$ can be viewed as the matrix $A=(a_{i,j})_{i\geq 0, j\geq 1}:\,\ell^{2}(\N)\to \ell^{2}(\N_0)$ with
$$%\be\label{MP2eq2}
a_{i1}=
\left\{
\begin{array}{ll}
1 & \mbox{if } i=0 \\
0 & \mbox{if } i>0
\end{array},
\right.
$$
and
%~\mbox{ and }~
$$a_{i,j}=j^{-c_1} \frac{(-c_2 \log j)^{i}}{i!}~\mbox{ for $\ i\geq 0,\, j>1$.}
$$
%By Theorem~A\Ref{gh},
By Theorem~A,  we already know that $A$ is bounded. We will give a direct proof of this fact, and moreover an upper and lower estimates of its norm. To that effect, we apply the Schur test with the following values of the parameters
$$ \alpha=1,\ \beta= \zeta(2 c_1-r c_2),\ p_j=j^{rc_2-c_1} \hbox{\ and}\ q_i=r^{i}.
$$

Now, we can check the assumptions of Schur's lemma.
Equality holds trivially in  the inequality (\ref{a})   for the case of $j=1$. For $j>1$,
$$\sum\limits_{i=0}^{\infty} |a_{i,j}| q_i=\sum\limits_{i=0}^{\infty} j^{-c_1} \frac{(c_2 \log j)^{i}}{i!} r^i=j^{rc_2-c_1}= \alpha p_j
$$
Thus, the inequality \eqref{a} is verified. Now, we verify the inequality \eqref{b}.  For the case $i=0$, we have
%$$\hbox{\ And}\quad  \sum_{j\geq 1} |a_{i,j}| p_j=\sum_{j\geq 1} j^{-c_1-1/2} \frac{(c_2 \log j)^{i}}{i!}, \hbox{\ so that}$$
$$\sum\limits_{j=1}^{\infty} |a_{0,j}| p_j=\sum\limits_{j=1}^{\infty} j^{-(2c_1-rc_2)}=\zeta(2c_1-rc_2)\leq \beta q_0.
$$
 Finally, for $i\geq 1$, with the help of Lemma \ref{bound}, we have
% $\sum_{j\geq 1} |a_{i,j}| p_j=\sum_{j\geq 1}\frac{c_{2}^{i}}{i!} \frac{(\log j)^{i}}{j^{c_1+1/2}}=\frac{c_{2}^{i}}{i!}\, |\zeta^{(i)}(c_1+1/2)| $.
$$\sum\limits_{j=1}^{\infty}|a_{i,j}|p_j=\frac{c_2^{i}}{i!}\sum\limits_{j=2}^{\infty}\frac{(\log j)^{i}}{j^{2c_1-rc_2}}\leq \frac{c_2^{i}}{i!}\frac{i!}{(2c_1-rc_2-1)^{i}}\zeta(2c_1-rc_2)=\beta q_i,
$$
where  $\frac{c_2}{2c_1-rc_2-1}=r$, that is, $P(r)=0$. The assumptions of the Schur lemma with the claimed values are thus verified, and the upper bound ensues.

 For the lower bound, we use reproducing kernels as usual (recall that $C_{\varphi}^{\ast}(K_a)=K_{\varphi(a)}$):
$$\Vert C_\varphi\Vert^2 \geq (S_{\varphi}^{\ast})^2:=\sup_{a\in \C_{1/2}} \frac{\Vert K_{\varphi(a)}\Vert^2}{\Vert K_a\Vert^2}= \sup_{a\in \C_{1/2}} \frac{\zeta(2{\rm Re}\, \varphi(a))}{\zeta(2{\rm Re}\,  a)}= \sup_{x>1/2} \frac{\zeta(2c_1-2c_2 2^{-x})}{\zeta(2 x)}\cdot
$$
The last equality in the above is obtained from basic trigonometry
%Lemma \ref{trig-min} \textcolor{red}{(no longer needed once $c_1$ and $c_2$ are positive?)} \textcolor{blue}{(Even though $c_1$ and $c_2$ are positive, $a$ is a complex number. To get a lower bound, the lemma 3.6 may not needed, but this lemma gives the equality. It may be useful to calculate $S_{\varphi}^{\ast}$ explicitly.)}
and the fact that $\zeta(s)$ is a decreasing function on $(1,\infty)$. Now by letting $x\to \infty$, we get the lower bound for $\| C_\varphi\|$. % \hfill $\square$
\end{proof}

\begin{corollaire}\label{equal-case} Let  $\varphi(s)=c_1+c_{2}2^{-s}  \hbox{\ with}\ {\rm Re}\,  c_1= \frac{1}{2}+|c_2|$ and $c_2\neq 0$. Then, for
the inducing composition operator $C_\varphi:\,\mathcal{H}^2\to \mathcal{H}^2$, we have
$$\zeta(2{\rm Re}\, c_1)= \zeta(1+2|c_2|) \leq \Vert C_\varphi\Vert ^2 \leq \zeta(1+|c_2|)= \zeta(2{\rm Re}\,  c_1-|c_2|).
$$
\end{corollaire}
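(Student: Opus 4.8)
The plan is to specialize Theorem~\ref{main one} to the boundary case ${\rm Re}\, c_1 = \tfrac12 + |c_2|$. First I would substitute the hypothesis directly into the lower bound: since $2{\rm Re}\, c_1 = 1 + 2|c_2|$, the left-hand inequality $\zeta(2{\rm Re}\, c_1) \le \|C_\varphi\|^2$ becomes $\zeta(1 + 2|c_2|) \le \|C_\varphi\|^2$, which is immediate. The only real content is to identify the value of $r$ (the smallest positive root of $P(r) = |c_2| r^2 + (1 - 2{\rm Re}\, c_1) r + |c_2|$) in this boundary situation and thereby pin down the upper bound $\zeta(2{\rm Re}\, c_1 - r|c_2|)$.

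The key observation is that when ${\rm Re}\, c_1 = \tfrac12 + |c_2|$ we have $1 - 2{\rm Re}\, c_1 = -2|c_2|$, so the quadratic reads $P(r) = |c_2| r^2 - 2|c_2| r + |c_2| = |c_2|(r-1)^2$. Since $c_2 \neq 0$, the unique (double) root is $r = 1$. Hence $r|c_2| = |c_2|$ and the upper bound from Theorem~\ref{main one} becomes $\zeta(2{\rm Re}\, c_1 - |c_2|) = \zeta(1 + 2|c_2| - |c_2|) = \zeta(1 + |c_2|)$. Stringing these together yields exactly the claimed chain
$$\zeta(2{\rm Re}\, c_1) = \zeta(1 + 2|c_2|) \le \|C_\varphi\|^2 \le \zeta(1 + |c_2|) = \zeta(2{\rm Re}\, c_1 - |c_2|).$$

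There is essentially no obstacle here: this is a direct corollary obtained by evaluating the generic bounds of Theorem~\ref{main one} at the boundary of the admissible parameter region. The only point requiring a word of care is the remark that $\zeta(1+|c_2|)$ is well defined, i.e.\ that its argument exceeds $1$; but $1 + |c_2| > 1$ since $c_2 \neq 0$, and likewise $1 + 2|c_2| > 1$, so both zeta values are finite. I would simply write out the factorization $P(r) = |c_2|(r-1)^2$, read off $r=1$, and invoke Theorem~\ref{main one}.
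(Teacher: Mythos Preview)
Your proposal is correct and matches the paper's own proof exactly: the paper simply notes that $r=1$ in Theorem~\ref{main one} when ${\rm Re}\, c_1 = \tfrac12 + |c_2|$, which is precisely the factorization $P(r)=|c_2|(r-1)^2$ you carried out. Your version is more detailed (explicitly verifying the lower bound and the well-definedness of the zeta values), but the approach is identical.
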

\begin{proof}
It suffices to observe that $r=1$ in Theorem \ref{main one} when ${\rm Re}\,  c_1= \frac{1}{2}+|c_2|$.
\end{proof}

\begin{remarque}%\label{}
From the proof of Theorem \ref{main one}, it is evident that the lower bound of  $\Vert C_\varphi\Vert$ continues to hold for any
composition operator $C_\varphi$ with $c_0=0$ in \eqref{phi-expn}, namely, for any $\varphi(s)=\sum\limits_{n=1}^\infty c_n n^{-s}$.
\end{remarque}

\begin{remarque}
{\rm
\begin{itemize}
\item[(a)] Note that, if $c_2=0$, then $\varphi$ becomes a constant map and the induced  composition operator $C_\varphi$ is the evaluation map at $c_1$. Also it is known that
$$\Vert C_\varphi\Vert ^2=\zeta(2{\rm Re}\,  c_1).
$$
\item[(b)] Let $\varphi$ be a map as in \eqref{aff}. Then $C_\varphi$ cannot be a normal operator.
More generally, it cannot be a normaloid operator because,
$$
r(C_\varphi)=1 < \sqrt{\zeta(2{\rm Re}\,  c_1)} \leq \Vert C_\varphi\Vert.
$$ (see Remark \ref{spec-radius} and Theorem \ref{main one}).
\end{itemize}
}
\end{remarque}

\section{Approximation numbers}\label{MPQ3Sec4}

Recall that the $N^{th}$ approximation number $a_{N}(T),\ N=1,2,\ldots$, of an operator $T:\,H\to H$, where $H$ is a
Hilbert space, is the distance (for the operator norm) of $T$ to operators of rank $<N$. We refer to \cite{CAST}
for the definition and basic properties of those numbers. In the case $\varphi(z)=az+b$ on $H^2$ with $|a|+|b|\leq 1$,
Clifford and Dabkowski \cite{CLDA} computed exactly the approximation numbers $a_{N}(C_\varphi)$. In
the compact case $|a|+|b|<1$, they \cite{CLDA} showed in particular that
$$ a_{N}(C_\varphi)=|a|^{N-1}Q^{N-1/2} \quad \mbox{ for all } N\geq 1,
$$
where
$$Q=\frac{1+|a|^2-|b|^2-\sqrt{\Delta}}{2|a|^2}
$$
and where $\Delta>0$ is a discriminant depending on $a$ and $b$.

It is natural to ask whether we could get something similar for $\varphi(s)=c_1+c_2 2^{-s}$ and the associated
$C_\varphi$ acting on $\mathcal{H}^2$. We have here  the following upper bound, in which $2\,{\rm Re}\,  c_1-2|c_2|-1$ is
assumed to be positive which is indeed a necessary and sufficient condition for the compactness of $C_\varphi$.

\begin{theoreme}\label{approx. no}
Assume that $2\,{\rm Re}\,  c_1-2|c_2|-1>0$. Then the following exponential decay holds:
 $$a_{N+1}(C_\varphi)\leq  \sqrt{\frac{(2\, {\rm Re}\,  c_1 -1)(2\, {\rm Re}\,  c_1)}{(2\, {\rm Re}\,  c_1 -1)^2-(2|c_2|)^2}}  \left(\frac{2|c_2|}{2\,{\rm Re}\,  c_1-1}\right)^N.
 $$
\end{theoreme}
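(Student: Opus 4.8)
The plan is to produce an explicit finite-rank approximant by truncating the matrix of $C_\varphi$ in its rows, and then to estimate the operator norm of the remainder by its Hilbert--Schmidt norm, which Lemma~\ref{bound} controls one row at a time. Recall from the proof of Theorem~\ref{main one} that, writing $e_j(s)=j^{-s}$, the range of $C_\varphi$ lies in the closed span of the orthonormal system $\{e_{2^i}\}_{i\ge0}$ and
$$\langle C_\varphi e_j,e_{2^{i}}\rangle=a_{i,j}:=j^{-c_1}\frac{(-c_2\log j)^{i}}{i!}\qquad(i\ge0,\ j\ge1),$$
so that $|a_{i,j}|=j^{-\operatorname{Re} c_1}\,\dfrac{|c_2|^{i}(\log j)^{i}}{i!}$ depends only on $\operatorname{Re} c_1$ and $|c_2|$ (in particular $a_{i,1}=0$ for $i\ge1$ and $a_{0,1}=1$); no reduction to real coefficients is needed. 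For an integer $N\ge0$ let $R_N$ be the operator $R_Ne_j=\sum_{i=0}^{N-1}a_{i,j}e_{2^{i}}$ (so $R_0=0$), which has rank at most $N$. Since $(C_\varphi-R_N)e_j=\sum_{i\ge N}a_{i,j}e_{2^{i}}$, one gets
$$a_{N+1}(C_\varphi)^2\le\|C_\varphi-R_N\|^2\le\|C_\varphi-R_N\|_{\mathrm{HS}}^2=\sum_{i\ge N}\ \sum_{j\ge1}|a_{i,j}|^{2}.$$

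I would then bound each row sum. For $i=0$ it equals $1+(\zeta(2\operatorname{Re} c_1)-1)=\zeta(2\operatorname{Re} c_1)$. For $i\ge1$, Lemma~\ref{bound} applied with the integer $2i$ in place of $i$ and with $s=2\operatorname{Re} c_1>1$ (using $\log1=0$ to pass from $j\ge2$ to $j\ge1$) gives $\sum_{j\ge2}(\log j)^{2i}j^{-2\operatorname{Re} c_1}\le(2i)!\,(2\operatorname{Re} c_1-1)^{-2i}\,\zeta(2\operatorname{Re} c_1)$, hence
$$\sum_{j\ge1}|a_{i,j}|^{2}=\frac{|c_2|^{2i}}{(i!)^2}\sum_{j\ge2}\frac{(\log j)^{2i}}{j^{2\operatorname{Re} c_1}}\le\binom{2i}{i}\left(\frac{|c_2|}{2\operatorname{Re} c_1-1}\right)^{2i}\zeta(2\operatorname{Re} c_1)\le\left(\frac{2|c_2|}{2\operatorname{Re} c_1-1}\right)^{2i}\zeta(2\operatorname{Re} c_1),$$
the last step using $\binom{2i}{i}\le4^{i}$; this inequality also holds for $i=0$. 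The hypothesis $2\operatorname{Re} c_1-2|c_2|-1>0$ is exactly the statement that the ratio $\tfrac{2|c_2|}{2\operatorname{Re} c_1-1}$ is $<1$, so summing the resulting geometric series over $i\ge N$ yields
$$a_{N+1}(C_\varphi)^2\le\zeta(2\operatorname{Re} c_1)\sum_{i\ge N}\left(\frac{2|c_2|}{2\operatorname{Re} c_1-1}\right)^{2i}=\zeta(2\operatorname{Re} c_1)\,\frac{(2\operatorname{Re} c_1-1)^2}{(2\operatorname{Re} c_1-1)^2-(2|c_2|)^2}\left(\frac{2|c_2|}{2\operatorname{Re} c_1-1}\right)^{2N}.$$
Finally, inserting $\zeta(2\operatorname{Re} c_1)\le\dfrac{2\operatorname{Re} c_1}{2\operatorname{Re} c_1-1}$ from Lemma~\ref{rely} and taking square roots gives precisely the asserted estimate. (Taking $N=0$ also re-proves, as a by-product, that $C_\varphi$ is Hilbert--Schmidt, hence compact, under the stated hypothesis.)

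There is essentially no serious obstacle here, only two choices to make: to estimate $\|C_\varphi-R_N\|$ through the Hilbert--Schmidt norm rather than a Schur test (which keeps the computation clean), and to use $\binom{2i}{i}\le4^{i}$, which is exactly the inequality that turns the row sums into a geometric series whose ratio $\bigl(\tfrac{2|c_2|}{2\operatorname{Re} c_1-1}\bigr)^{2}$ is $<1$ precisely at the compactness threshold $2\operatorname{Re} c_1-2|c_2|-1>0$. The only point requiring a little care is the bookkeeping of the $i=0$ row (and the $j=1$ column), since Lemma~\ref{bound} is stated only for exponents $\ge1$; treating $i=0$ separately, as above, disposes of it.
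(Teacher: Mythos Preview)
Your proof is correct and follows essentially the same route as the paper's: truncate the matrix of $C_\varphi$ to its first $N$ rows, bound the remainder via Lemma~\ref{bound} applied with exponent $2i$, use $\binom{2i}{i}\le 4^i$, sum the resulting geometric series, and finish with the upper bound on $\zeta$ from Lemma~\ref{rely}. The only cosmetic differences are that you invoke the Hilbert--Schmidt norm inequality $\|T\|\le\|T\|_{\mathrm{HS}}$ by name, whereas the paper writes out the Cauchy--Schwarz step that proves it in this instance, and you work directly with $\operatorname{Re} c_1$ and $|c_2|$ rather than first reducing to real coefficients.
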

\begin{proof} Without loss of generality, we can assume that $c_1$ and $c_2$ are non-negative. Let $f(s)=\sum_{n=1}^\infty b_n n^{-s}\in \mathcal{H}^2$. Then
\beqq
C_{\varphi}f(s)&=&\sum_{n=1}^\infty b_n n^{-c_1} \exp(-c_2 2^{-s}\log n)\\
&=&\sum_{k=0}^\infty \frac{(-c_2)^k}{k!}\left (\sum_{n=1}^\infty b_n n^{-c_1}(\log n)^k\right ) 2^{-ks}.
\eeqq
Thus, designating by $R$ the operator of rank $\leq N$ defined by
$$Rf(s)=\sum_{k=0}^{N-1} \frac{(-c_2)^k}{k!}\left (\sum_{n=1}^\infty b_n n^{-c_1}(\log n)^k\right ) 2^{-ks},
$$
we obtain via the classical Cauchy-Schwarz inequality that
\beqq
\Vert C_{\varphi}(f)-R(f)\Vert^2 &= &\sum_{k=N}^{\infty} \frac{c_2^{2k}}{k!^{2}}\left |\sum_{n=1}^\infty b_n n^{-c_1}(\log n)^k\right|^2\\
&\leq& \sum_{k=N}^{\infty} \frac{c_2^{2k}}{k!^{2}}\left  (\sum_{n=1}^\infty |b_n|^2\right)\left (\sum_{ n=1}^\infty\frac{(\log n)^{2k}}{n^{2 c_1}}
\right).
\eeqq
 By Lemma \ref{bound}, the latter sum is nothing but
\beqq \sum_{ n=1}^\infty\frac{(\log n)^{2k}}{n^{2 c_1}} &=& \zeta^{(2k)}(2 c_1)\\
&\leq& \frac{(2k)!}{(2c_1-1)^{2k}}\zeta(2 c_1)\\
&\leq& \frac{(2k)! (2c_1)}{(2c_1-1)^{2k+1}}.
\eeqq
The last inequality follows by the simple fact that $\zeta(s)\leq \frac{s}{s-1}$ (see Lemma \ref{rely}).
Since
$$\sum_{n=1}^\infty |b_n|^2=\Vert f\Vert^2 ~\mbox{  and }~\frac{(2k)!}{(k!)^2} \leq \sum_{j=0}^{2k} {2k\choose j}=4^k,
$$
we get the following:
\beqq
\Vert C_\varphi -R\Vert^2&\leq& \sum_{k=N}^\infty \frac{c_{2}^{2k}}{(k!)^2}\frac{(2k)!(2 c_1)}{(2c_1-1)^{2k+1}}\\
&\leq &\sum_{k=N}^\infty \left (\frac{2c_2}{2c_{1}-1}\right)^{2k} \frac{2 c_1}{2c_{1}-1}\\
&=& \frac{2 c_1(2 c_1 -1)}{(2 c_1 -1)^2-(2 c_2)^2} \left(\frac{2 c_2}{2 c_1-1}\right)^{2N}.
\eeqq
Thus, we complete the proof.
\end{proof}
%
%\textcolor{red}{ Dear Herve, in the earlier version, you have used the inequality
%$$ \sum_{ n=1}^\infty\frac{(\log n)^{2k}}{n^{2 c_1}} = \zeta^{(2k)}(2 c_1) \leq \frac{(2k)!}{(2c_1-1)^{2k+1}}.
%$$
%This inequality was also your claim in our earlier communications. But, since we have no proof for this,
%we had to modify the proof. Moreover, there was some mistake which we have corrected. However, in case you have  proof for this inequality, then
%Lemma \ref{bound} and hence Theorem \ref{main one}, Theorem \ref{approx. no} will be improved further.
%}

\section{Comments and questions}
\begin{itemize}

 \item  Is there a symbol $\varphi$ for which the strict inequalities
 $$\Vert C_\varphi\Vert>S_{\varphi}^{\ast}>S_\varphi
 $$ hold for $C_\varphi$ on $\mathcal{H}^2$? (refer to \cite{ABT} for similar problem in the case of classical Hardy space $H^{2}$).
  In the case $\varphi(s)=c_1+c_2 2^{-s}$, we probably have
 $$\Vert C_\varphi\Vert=S_{\varphi}^{\ast}=S_\varphi,
 $$  but this still needs a proof. Also observe that this $\varphi$ is not injective on $\C_{1/2}$.

 \item What can be said about $\Vert C_\varphi\Vert$ acting on $H^{2}(\Omega)$, where $\Omega$ is  the ball $\mathbb{B}_d$, or  the polydisk $\D^d$, when $\varphi(z)=A(z)+b$ with $A:\,\C^d\to \C^d$ a linear operator, i.e. when $\varphi$ is an affine map such that $\varphi(\Omega)\subset \Omega$? This might be difficult \cite{BA1}, but interesting.
 \end{itemize}

\subsection*{Acknowledgement}
The authors thank the referee for many useful comments.
The first author thanks the Council of Scientific and Industrial Research (CSIR), India,
for providing financial support in the form of a SPM Fellowship to carry out this research.
 The third author  thanks the Indian Statistical Institute of
Chennai for providing good and friendly working conditions in December 2015,  when this collaboration was initiated.

%%%%%%%%%%%%%%%%%%%%%%%%%%%%%%%%%%%%%%%%%%%%%%%%%%%%%%%%%%%%%%%%%%%%%%%%%%%%%%%%%%%%%%%%%%%%%%%%%%%%%%%%%%%%%%%%%%%%%%%%%%%

%\bigskip
%
%
%\vbox{\small \noindent{\it
%{\rm P. Muthukumar},
%Indian Statistical Institute (ISI), Chennai Centre,\\
%SETS (Society for Electronic Transactions and Security),\\
%MGR Knowledge City, CIT Campus, Taramani,\\
%Chennai 600 113, India.\\
%pmuthumaths@gmail.com}}
%
%\bigskip
%
%\vbox{\small \noindent{\it
%{\rm S. Ponnusamy},
%Indian Statistical Institute (ISI), Chennai Centre,\\
%SETS (Society for Electronic Transactions and Security),\\
%MGR Knowledge City, CIT Campus, Taramani,\\
%Chennai 600 113, India.\\
%samy@isichennai.res.in, samy@iitm.ac.in}}
%
%
%\bigskip
%
%
%\vbox{\small \noindent{\it
%{\rm Herv\'e Queff\'elec}, Univ Lille Nord de France F-59\kern 1mm 000 LILLE, \\
%USTL, Laboratoire Paul Painlev\'e U.M.R. CNRS 8524, \\
%F-59\kern 1mm 655 VILLENEUVE D'ASCQ Cedex,
%FRANCE \\
%Herve.Queffelec@univ-lille1.fr}}

\end{document}